\newtheorem{thm}{Theorem}[section]
\newtheorem{lemma}[thm]{Lemma}
\newtheorem{cor}[thm]{Corollary}
\newtheorem{conj}[thm]{Conjecture}
\theoremstyle{definition}
\theoremstyle{remark}
\newtheorem{remark}[thm]{Remark}
\numberwithin{equation}{section}
\def\alp{{\alpha}} 
\def\bet{{\beta}}  
\def\gam{{\gamma}} 
\def\del{{\delta}} 
\def\eps{\varepsilon}
\def\le{\leqslant}
\def\ge{\geqslant}
\def\geq{\geqslant}
\def \bN {\mathbb N}
\def \bQ {\mathbb Q}
\def \bR {\mathbb R}
\def \bZ {\mathbb Z}
\def \cA {\mathcal A}
\def \cB {\mathcal B}
\def \cE {\mathcal E}
\def \cF {\mathcal F}
\def \cG {\mathcal G}
\def \cK {\mathcal K}
\def \cS {\mathcal S}
\def \dim {\mathrm{dim}}
\def \Bad {{\mathrm{\mathbf{Bad}}}}
\def \dimH {{\mathrm{dim}_{\mathrm{H}}}}
\begin{document}
\title[Inhomogeneous multiplicative approximation in Bad]{Fully-inhomogeneous multiplicative diophantine approximation of badly approximable numbers}
\subjclass[2020]{11J83 (primary), 11J70, 28A78 (secondary)}
\keywords{Diophantine approximation, lacunary sequences, Rajchman measures}
\author{Sam Chow \and Agamemnon Zafeiropoulos}
\address{Mathematics Institute, Zeeman Building, University of Warwick, Coventry CV4 7AL, United Kingdom}
\email{sam.chow@warwick.ac.uk}
\address{Department of Mathematical Sciences, Norwegian University of Science and Technology, 7491 Trondheim, Norway}
\email{agamemnon.zafeiropoulos@ntnu.no}

\begin{abstract}
We establish a strong form of Littlewood's conjecture with inhomogeneous shifts, for a full-dimensional set of pairs of badly approximable numbers on a vertical line. We also prove a uniform assertion of this nature, generalising a strong form of a result of Haynes, Jensen and Kristensen. Finally, we establish a similar result involving inhomogeneously badly approximable numbers, making progress towards a problem posed by Pollington, Velani, Zafeiropoulos and Zorin.
\end{abstract}
\maketitle

\section{Introduction}

A famous, long-standing open problem in Diophantine approximation is {\it Littlewood's conjecture}, which states that if $\alpha, \beta\in \bR$ then
\begin{equation} \label{LC}
    \liminf_{n\to\infty} n\|n\alpha\| \cdot \|n\beta\| = 0,
\end{equation}  
where for $x \in \bR$ we write $\| x \| = \min \{ |x-k|: k \in \bZ \}$. Observe that \eqref{LC} holds trivially unless both $\alpha$ and $\beta$ belong to the set 
\[\Bad = \{ \alp \in \bR : \liminf_{n\to\infty} n\|n\alpha\|>0 \}\]
of badly approximable numbers. As such, we may regard the conjecture as a statement about pairs of badly approximable numbers. Badly approximable numbers and Littlewood's conjecture arise naturally in dynamical systems via bounded orbits. Moreover, Margulis \cite{Mar2000} made a highly-influential conjecture in homogeneous dynamics that generalises Littlewood's.

Refining a celebrated result of Pollington and Velani \cite{PV}, it was shown in \cite[Equation (18)]{PVZZ2021} that if $\alp \in \Bad$ and $\del \in \bR$ then there exists $\cG = \cG(\alp,\del) \subseteq \Bad$ of full Hausdorff dimension such that if $\bet \in \cG$ then
\[
\# \{ n \in [N]: n \| n \alp \| \cdot \| n \bet - \del \| \le 1/ \log n \} \gg \log \log N,
\]
for large $N \in \bN$. We extend this further to the fully-inhomogeneous setting. In addition, we relax the condition $\alp \in \Bad$ to $\alp \in \cK$, where
\begin{equation} \label{Kdef}
\cK = \{ \alp \in \bR \setminus \bQ: \sup\{ t^{-1} \log q_t(\alp): t \in \bN \} < \infty \}.
\end{equation}
Here $q_1(\alp),q_2(\alp),\ldots$ are the continued fraction convergent denominators of $\alp$, see \S \ref{SequenceSection}.

\begin{thm} \label{MainThm}
Let $\alp \in \cK$ and $\gam, \del \in \bR$. Then there exists $\cG  = \cG(\alp,\gam,\del) \subseteq \Bad$ of Hausdorff dimension $\dimH(\cG) = 1$ such that if $\bet \in \cG$ then
\begin{equation} \label{Counting}
\# \{ n \in [N]: n \| n \alp - \gam \| \cdot \| n \bet - \del \| \le 1/ \log n \} \gg \log \log N,
\end{equation}
for large $N \in \bN$.
\end{thm}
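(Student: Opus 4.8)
The plan is to construct $\cG$ as a Cantor-type subset of $\Bad$ via the classical "Cantor winning"/tree-construction method, where at each level we remove a controlled number of small intervals so as to (i) stay inside $\Bad$ and (ii) guarantee the counting lower bound in \eqref{Counting}. The heart of the matter is to understand, for a given $\bet$, the set of $n \le N$ which are "good", meaning $n\|n\alp - \gam\| \cdot \|n\bet - \del\| \le 1/\log n$. Since $\alp \in \cK$, one controls $\|n\alp - \gam\|$ along a suitable subsequence of denominators: using the three-distance theorem (or directly the continued fraction expansion of $\alp$), for each scale there are values of $n$ of size comparable to $q_t(\alp)$ with $\|n\alp - \gam\| \ll 1/q_t(\alp)$, and the hypothesis $\log q_t(\alp) \ll t$ ensures these scales are spaced at most geometrically, so that $q_t \le q_{t-1}^{1+o(1)}$ fails but rather $\log q_t$ grows linearly — giving us roughly $\log\log N$ usable scales up to $N$. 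For each such $n$ we then only need $\|n\bet - \del\| \ll 1/(n\|n\alp-\gam\|\log n)$, i.e. $\bet$ should lie within a prescribed distance $\asymp 1/(q_t \cdot (1/q_t) \cdot \log q_t) = 1/\log q_t$-type window of a rational with denominator $\asymp q_t$ shifted by $\del/n$. Wait — more carefully, we need $\|n\bet-\del\|$ small, which pins $\bet$ to an interval of length $\asymp 1/(n^2 \|n\alp-\gam\| \log n)$ around $(k+\del)/n$.

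**The Cantor construction.** I would build a nested sequence of compact sets $\cG \supseteq \cdots \supseteq K_s \supseteq K_{s+1} \supseteq \cdots$, where passing from level $s$ to level $s+1$ we do two things at a geometric sequence of scales $Q_s$ (with $\log Q_{s+1} \asymp 2 \log Q_s$, so that a dyadic range $[N]$ meets $\asymp \log\log N$ levels). First, the \emph{badly approximable constraint}: within each surviving interval of length $\asymp Q_s^{-2}$ we keep only those sub-intervals avoiding a neighbourhood of rationals of denominator up to $Q_{s+1}$, of radius $c Q_{s+1}^{-2}$ for a small fixed $c>0$; this is the standard construction showing $\Bad$ has full dimension, and it removes only a bounded number of children per parent, so the dimension loss at each level is $o(1)$ and $\dimH = 1$ in the limit. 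Second, the \emph{hitting constraint}: inside each surviving parent interval we locate an $n \asymp Q_s$ with $\|n\alp-\gam\|$ small (here is where $\alp\in\cK$ enters — we need such $n$ whose associated target interval for $\bet$, namely around $(k+\del)/n$, lies inside the parent), and we \emph{restrict} to the child interval centered there of the appropriate small radius. The key bookkeeping is that the hitting radius $\asymp 1/(Q_s^2 \log Q_s)$ is much larger than the $\Bad$-removal radius $\asymp Q_{s+1}^{-2} \asymp Q_s^{-4}$, so the two constraints are compatible: we can first impose "hit", then within the hit-child impose "stay in $\Bad$".

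**Verifying the count and the dimension.** For $\bet \in \cG = \bigcap_s K_s$ and large $N$, let $s$ be maximal with $Q_s \le N$; then $s \asymp \log\log N$. For every $s' \le s$, the level-$s'$ construction guarantees an integer $n_{s'} \asymp Q_{s'-1}$ (or $\asymp Q_{s'}$) with $n_{s'} \le N$ and $n_{s'}\|n_{s'}\alp - \gam\|\cdot\|n_{s'}\bet-\del\| \le 1/\log n_{s'}$; since the $Q_{s'}$ grow geometrically in the logarithm, the $n_{s'}$ are distinct, yielding $\gg s \gg \log\log N$ solutions, which is \eqref{Counting}. For the dimension, one applies the mass distribution principle / the standard Cantor-set dimension lemma (e.g.\ as in the Pollington–Velani framework): at level $s$ each parent interval of length $\ell_s \asymp Q_s^{-2}$ contains $\gg 1$ children of length $\ell_{s+1} \asymp Q_{s+1}^{-2}$ after both constraints, and $\log \ell_{s+1}/\log\ell_s \to 1$, so $\dimH(\cG) \ge \liminf \frac{\log(\#\text{children})}{-\log(\ell_{s+1}/\ell_s)} = 1$. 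The main obstacle — and the place requiring genuine care rather than bookkeeping — is the simultaneous realisation of the hitting constraint: one must show that inside \emph{every} surviving level-$s$ interval there is a valid $n$ of the right size with $\|n\alp-\gam\|$ suitably small \emph{and} with the induced $\bet$-target landing inside that interval. This is where the inhomogeneous shift $\gam$ makes things delicate: unlike the homogeneous case, $\|n\alp-\gam\|$ is not controlled purely by the continued fraction algorithm, and one needs an inhomogeneous approximation input — typically the inhomogeneous three-distance theorem or a Rajchman/lacunary-series estimate (as the abstract's keywords suggest) — to guarantee enough such $n$ at each scale, uniformly over the choices made at earlier levels. I expect the bulk of the technical work to be in quantifying this and in checking the numerology $\log Q_{s+1} \asymp 2\log Q_s$ is consistent with both the density of good $n$ afforded by $\cK$ and the room needed for the $\Bad$-removal.
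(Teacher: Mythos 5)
Your proposal takes a genuinely different route from the paper, and there is a gap at the step you yourself flag as ``requiring genuine care''; in fact that gap is not a technicality but the entire difficulty, and your construction as stated does not get around it.

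The paper's proof is measure-theoretic rather than constructive. It builds a lacunary integer sequence $n_1,n_2,\ldots$ (Lemma \ref{sequence}, via the three-distance theorem) with $\log n_t\asymp t$ and $\|n_t\alp-\gam\|\ll 1/n_t$, and then appeals to the counting theorem of \cite{PVZZ2021} (Theorem 1 there): for a probability measure $\mu$ with polynomial Fourier decay — here Kaufman's measure on $\cF_M$ — and a lacunary sequence $(n_t)$, one has, for $\mu$-almost every $\bet$,
\[
\#\{t\in[T]:\|n_t\bet-\del\|\le\psi(n_t)\}=2\Psi(T)+O\bigl(\Psi(T)^{2/3}(\log\Psi(T)+2)^{2.1}\bigr),\qquad \Psi(T)=\sum_{t\le T}\psi(n_t).
\]
With $\psi(n)\asymp 1/\log n$ one gets $\Psi(T)\asymp\log T\asymp\log\log n_T$, and the counting bound \eqref{Counting} drops out. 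The full-dimension statement then comes from a dimension-inheritance remark in \cite{PVZZ2021} together with Jarn\'{\i}k's theorem $\dimH(\cF_M)\to 1$. The decisive ingredient is the quasi-independence of the events $\{\|n_t\bet-\del\|\le\psi(n_t)\}$ with respect to $\mu$, which is supplied by Fourier decay (the Rajchman property of Kaufman's measure) and the lacunarity of $(n_t)$.

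The gap in your Cantor-tree construction is precisely the spot you call the ``main obstacle.'' At level $s$ you want \emph{every} surviving parent interval of length $\asymp Q_s^{-2}$ to contain a point of the form $(k+\del)/n$ for some $n\asymp Q_s$ with $\|n\alp-\gam\|\ll 1/n$. But the three-distance theorem/$\alp\in\cK$ gives you essentially \emph{one} such $n$ per dyadic scale (the sequence of Lemma \ref{sequence} is a single lacunary sequence, not a dense family), and for that single $n$ the points $(k+\del)/n$ are spaced $\asymp 1/n\asymp Q_s^{-1}$ apart, which is vastly larger than the parent length $Q_s^{-2}$. So generically a parent interval contains \emph{no} point $(k+\del)/n$, and the hit-restriction step kills the branch; the tree dies. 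One cannot fix this by taking parent length $\asymp 1/n$ instead of $\asymp n^{-2}$ without breaking the numerology needed both for the $\Bad$-condition and for the dimension count. The resolution — the reason the paper is written the way it is — is to abandon hitting in every branch and instead prove that for a generic $\bet$ (w.r.t.\ a Rajchman measure supported in $\Bad$) the hits occur at the right rate; that is exactly what the Fourier-analytic quasi-independence argument of \cite{PVZZ2021} delivers, and it is what a purely combinatorial tree construction of the kind you describe does not. So your overall shape of argument (full-dimensional Cantor subset of $\Bad$) is plausible in outline, but the essential mechanism is missing and a naive implementation fails.
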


The fact that $\Bad \subset \cK$ follows from badly approximable numbers having bounded partial quotients \cite[Theorem 1.4]{BRV2016}, and from the basic recursion \eqref{DenomRecurs}. Note also that we have imposed an explicit, generic Diophantine condition on $\alp$. Indeed, the set $\cK$ has full Lebesgue measure, in the sense that the Lebesgue measure of its complement $\bR \setminus \cK$ is zero. This follows immediately from a result of Khintchine \cite{Khi1936}, refined by L\'evy \cite{Lev1936} who showed that
\[
\frac{\log q_t(\alp)}{t} \to \frac{\pi^2}{12 \log 2}
\]
for almost all $\alp$, see \cite[Chapter V]{RS1992}.

Our approach involves a sequence of probability measures constructed by Kaufman \cite{Kau1980}, which enjoy a certain Fourier decay property, see also \cite{JS2016, QR2003}. The idea is to study the Diophantine approximation rate of a number that is generic with respect to this measure, meaning that the results should hold outside of a set of measure zero. This is philosophy in the field of \emph{metric Diophantine approximation}. Lebesgue measure is not suitable for studying badly approximable numbers in this way, since $\Bad$ has Lebesgue measure zero. If one works with pairs of real numbers, without restricting attention to badly approximable numbers, then the corresponding Lebesgue measure problems have been keenly investigated by Gallagher \cite{Gal1962} and subsequent authors \cite{BHV2020, Cho2018, CT2019, CT2021, CL2021}.

It would be desirable to have a version of \eqref{Counting} involving numbers that are badly approximable in an inhomogeneous sense. To clarify what is meant here, for $\del \in \bR$ we define
\[
\Bad(\del) = \{ \bet \in \bR: 
\liminf_{n \to \infty} n \| n \bet - \del \| > 0 \}.
\]

\begin{conj} \label{FirstConj}
Let $\alp \in \cK$ and $\gam, \del \in \bR$. Then there exists $\cG = \cG(\alp,\gam,\del) \subseteq \Bad(\del)$ of Hausdorff dimension $\dimH(\cG) = 1$ such that if $\bet \in \cG$ then
we have \eqref{Counting}
for large $N \in \bN$.
\end{conj}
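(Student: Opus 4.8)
The plan is to adapt the proof of Theorem~\ref{MainThm}, the one essential modification being the choice of ambient measure. In the proof of Theorem~\ref{MainThm} one works with a measure of Kaufman type: for each large $M$ there is a Borel probability measure $\mu_M$ with $\mathrm{supp}\,\mu_M \subseteq \Bad$, with $\dimH(\mathrm{supp}\,\mu_M) \to 1$ as $M \to \infty$, and enjoying polynomial Fourier decay $\widehat{\mu}_M(\xi) \ll_M |\xi|^{-\eta_M}$ for some $\eta_M > 0$. To attack Conjecture~\ref{FirstConj} one would want precisely such a family $(\mu_M)_M$, but with the strengthened support condition $\mathrm{supp}\,\mu_M \subseteq \Bad(\del)$. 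Granting this, everything else should go through with only cosmetic changes.

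The first step is deterministic. Fix $\alp \in \cK$ and $\gam \in \bR$. Using the inhomogeneous continued fraction (Ostrowski) machinery attached to $(\alp,\gam)$, one constructs a strictly increasing sequence of positive integers $m_1 < m_2 < \cdots$, depending only on $\alp$ and $\gam$, of bounded lacunarity, with
\[
m_j \, \| m_j \alp - \gam \| \ll 1 \quad (j \ge 1), \qquad \#\{ j : m_j \le N \} \gg \log N .
\]
It is here that $\alp \in \cK$ enters: the bound $\log q_t(\alp) \ll t$, together with the inequality $q_{t+2}(\alp) \ge 2 q_t(\alp)$, forces the convergent denominators — and hence the Ostrowski approximants assembled from them — to grow at a bounded geometric rate, which yields both the bounded lacunarity and the stated density of the truncated sequence.

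The second step is the probabilistic heart of the argument. Set $\psi_j := (\log m_j)^{-1}$ and $E_j := \{ \bet : \| m_j \bet - \del \| \le \psi_j \}$; one shows that for $\mu_M$-almost every $\bet$,
\[
\#\{ j : m_j \le N, \ \bet \in E_j \} \gg \sum_{j : m_j \le N} \psi_j \asymp \log\log N ,
\]
the final estimate being a harmonic-type sum, since $\log m_j \asymp j$ while there are $\asymp \log N$ admissible indices $j$. The quasi-independence of the $E_j$ needed to run a second-moment (Chung--Erd\H{o}s) Borel--Cantelli argument comes from expanding the indicator of the target interval in a Fourier series and playing the decay of $\widehat{\mu}_M$ off against the lacunarity of $(m_j)$; the shift $\del$ merely multiplies the relevant Fourier coefficients by unimodular constants and is harmless. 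Combining the two steps, for $\mu_M$-almost every $\bet$ and each of the $\gg \log\log N$ admissible $j$ one has $n := m_j \le N$ and
\[
n \, \| n\alp - \gam \| \cdot \| n \bet - \del \| \ll \psi_j = \frac{1}{\log n} ,
\]
which, after absorbing the implied constant into $\psi_j$, is \eqref{Counting}. Finally, to pass from ``$\mu_M$-almost every $\bet$'' to a subset of $\Bad(\del)$ of full Hausdorff dimension, one takes $\cG$ to be the union over $M \to \infty$ of the corresponding $\mu_M$-conull good sets and combines a Frostman bound for each $\mu_M$ with the countable stability of Hausdorff dimension.

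The genuinely new ingredient, and the step I expect to be the main obstacle, is the construction promised in the first paragraph: a Rajchman measure of near-maximal dimension supported on $\Bad(\del)$. The classical set $\Bad$ admits such measures because of the self-similar structure coming from the Gauss map, which Kaufman's construction exploits directly; the inhomogeneous set $\Bad(\del)$ has no comparably clean structure, and both its size and its arithmetic character depend delicately on $\del$. One would presumably have to work instead with an inhomogeneous Ostrowski symbolic coding, selecting the admissible digit patterns so as to simultaneously force inhomogeneous bad approximability and retain enough pseudorandomness to control the Fourier transform of the resulting measure — and bounding that Fourier transform, as in every argument of Kaufman type, is the crux. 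This is why only a partial result towards Conjecture~\ref{FirstConj} currently seems within reach, rather than the full statement.
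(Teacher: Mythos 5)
You were asked about Conjecture~\ref{FirstConj}, which the paper does \emph{not} prove --- it is labelled a conjecture and remains open. Your ``proof proposal'' is, appropriately, not a proof: you correctly recognise this and instead outline the strategy and pinpoint the obstruction. Your assessment matches the authors' own almost exactly. Immediately after stating the conjecture, the paper writes: ``We expect that proving results of this type would involve the construction of \emph{inhomogeneous Kaufman measures}, that is, measures analogous to Kaufman's that are supported on $\Bad(\del)$, where $\del \in \bR$ is arbitrary.'' This is precisely the missing ingredient you identify in your final paragraph (a Rajchman measure of near-maximal dimension supported on $\Bad(\del)$), and you are right that constructing such a measure --- and in particular establishing its Fourier decay --- is the crux.

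Your sketch of the conditional argument, granting the inhomogeneous Kaufman measure, tracks the proof of Theorem~\ref{MainThm} closely. The deterministic step you describe is carried out in Lemma~\ref{sequence}, except that the paper uses the three distance theorem rather than an Ostrowski coding: one takes $n_t = q_{6t} + b$ with $b$ chosen so that $\|b\alp-\gam\| \le 1/q_{6t}$, and the hypothesis $\alp\in\cK$ is used exactly as you say, to get $\log n_t \asymp t$ together with lacunarity. Your probabilistic step, with $\psi_j \asymp (\log m_j)^{-1}$, is the role played by \cite[Theorem 1]{PVZZ2021} in the proof of Theorem~\ref{MainThm}, and you correctly arrive at the $\gg \log\log N$ count. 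Your passage from $\mu_M$-full measure to full Hausdorff dimension via a Frostman-type bound and $M\to\infty$ is the same device the paper borrows from \cite[Remark 7]{PVZZ2021} together with Jarn\'ik's theorem. So, conditionally, everything you describe is sound and mirrors the paper's Theorem~\ref{MainThm}.

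The one thing worth adding is that the paper's actual progress towards Conjecture~\ref{FirstConj} is Corollary~\ref{thm2cor}, which sidesteps the missing measure by going doubly metric: rather than fixing $\del$ and producing a full-dimensional set of $\bet\in\Bad(\del)$, it produces a two-dimensional set of pairs $(\bet,\del)$ with $\bet\in\Bad\cap\Bad(\del)$, via Theorem~\ref{thm2}, Tseng's twisted result, and Marstrand's slicing theorem. That is a genuinely different route around the obstacle you identified --- it trades the single-metric statement for a doubly metric one instead of building a new measure --- and it is the reason the paper can state something unconditional.
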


Conjecture \ref{FirstConj} generalises a problem posed in \cite{PVZZ2021}. We expect that proving results of this type would involve the construction of \emph{inhomogeneous Kaufman measures}, that is, measures analogous to Kaufman's that are supported on $\Bad(\del)$, where $\del \in \bR$ is arbitrary.

In addition to the theory surrounding Kaufman's measures, we require a second main input. A sequence $n_1,n_2,\ldots$ in $(0,\infty)$ is \emph{lacunary} if for some $c > 1$ we have 
\[
n_{i+1} \ge cn_i
\qquad (i \in \bN).
\]
Given $\alp \in \cK$ and $\gam \in \bR$ we construct a lacunary sequence $n_1,n_2, \ldots$ of positive integers, admitting a polynomial upper bound, such that
\[
\| n_t \alp - \gam \| \ll n_t^{-1} \qquad (t \in \bN).
\]
The point is that this mimics the sequence of continued fraction denominators of $\alp$, adapting to the inhomogeneous shift. Enthusiasts may wonder if this sequence enables a fully-inhomogeneous version of a uniform inhomogeneous approximation result of Haynes and friends \cite{HJK2014}, which uses discrepancy theory \cite{KN1974}. Below we state the slightly stronger version from \cite{TZ2020}.

\begin{thm} [Technau--Zafeiropoulos 2020, improving Haynes--Jensen--Kristensen 2014]
Fix $\eps > 0$, and let $\alp_1,\alp_2,\ldots$ be a sequence of badly approximable numbers. Then there exists $\cG \subseteq \Bad$ of Hausdorff dimension $\dimH(\cG) = 1$ such that if $\bet \in \cG$, $i \in \bN$ and $\del \in \bR$ then
\[
n \| n \alp_i \| \cdot \| n \bet - \del \| < \frac{(\log \log \log n)^{\eps + 1/2}}{(\log n)^{1/2}}
\]
has infinitely many solutions $n \in \bN$.
\end{thm}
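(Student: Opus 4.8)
The plan is to reduce the assertion to a metric equidistribution statement, with respect to a Kaufman measure, for the continued fraction denominators of each $\alp_i$, and then to run a law-of-the-iterated-logarithm argument for the resulting lacunary sums; write $e(x) = e^{2\pi i x}$. \emph{Reduction.} Fix $i$ and abbreviate $q_k = q_k(\alp_i)$. Since $\alp_i \in \Bad$ has bounded partial quotients, the $q_k$ grow geometrically, so $(q_k)_{k \ge 1}$ is lacunary and $\log q_k \asymp_i k$; moreover the standard bound $\| q_k \alp_i \| < q_{k+1}^{-1}$ gives $q_k \| q_k \alp_i \| < 1$. Hence, on restricting the variable $n$ to the sequence $(q_k)_k$, it suffices to produce $\cG \subseteq \Bad$ with $\dimH \cG = 1$ such that, for all $\bet \in \cG$, all $i \in \bN$ and all $\del \in \bR$, there are infinitely many $k$ with $\| q_k(\alp_i) \bet - \del \| < \psi(q_k(\alp_i))$, where $\psi(x) = (\log\log\log x)^{\eps + 1/2}(\log x)^{-1/2}$; note $\psi(q_l) \asymp_i (\log\log 2^j)^{\eps + 1/2} 2^{-j/2}$ for $l$ in the dyadic block $(2^j, 2^{j+1}]$. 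Equivalently, for all large $j$ the arcs $\{ (q_l\bet - \psi(q_l), q_l\bet + \psi(q_l)) : 2^j < l \le 2^{j+1} \}$ must cover $\bR/\bZ$ — a Dvoretzky-type covering problem for the lacunary sequence acting on $\bet$.

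\emph{The measure.} For $M \in \bN$ let $\mu_M$ be a Kaufman measure supported on the badly approximable numbers with partial quotients at most $M$, so that $\dim \mu_M \to 1$ as $M \to \infty$ and $|\hat{\mu}_M(\xi)| \ll_M |\xi|^{-\eta_M}$ for some $\eta_M > 0$ \cite{Kau1980, QR2003, JS2016}. If, for each $M$, one finds a set $\cG_M$ of full $\mu_M$-measure on which the covering property holds for every $i$, then $\cG := \bigcup_M \cG_M \subseteq \Bad$ satisfies $\dimH \cG \ge \sup_M \dim \mu_M = 1$ (and trivially $\dimH \cG \le 1$). As the covering property for all $i$ is a countable intersection of conditions, we may fix $M$ and $i$ and write $\mu = \mu_M$.

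\emph{Metric equidistribution of $(q_k\bet)$.} Let $E_j$ be the event that some arc of $\bR/\bZ$ of length $\asymp_i (\log\log 2^j)^{\eps+1/2} 2^{-j/2}$ is disjoint from $\{ q_l\bet : 2^j < l \le 2^{j+1} \}$. By Borel--Cantelli it is enough to prove $\sum_j \mu(E_j) < \infty$, since then $\mu$-a.e.\ $\bet$ avoids $E_j$ for all large $j$, which — taking the arc centred at $\del$ and using $\psi(q_l) \asymp_i \psi(q_{2^j})$ across the block — gives the covering property, hence infinitely many admissible $k$. One covers $\bR/\bZ$ by $\ll_i 2^{j/2}(\log\log 2^j)^{-\eps-1/2}$ arcs $J$ of that length and bounds $\mu(E_j) \le \sum_J \mu\big( \#\{ 2^j < l \le 2^{j+1} : q_l\bet \in J \} = 0 \big)$. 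The count $\sum_{2^j < l \le 2^{j+1}} \mathbf 1_J(q_l\bet)$ is a lacunary sum whose mean, computed by expanding $\mathbf 1_J$ in a Fourier series and invoking $|\hat{\mu}(h q_l)| \ll_M (h q_l)^{-\eta_M}$ (negligible, as $q_l$ dwarfs $|J|^{-1}$), is $\asymp_i 2^j |J| \asymp_i 2^{j/2}(\log\log 2^j)^{\eps+1/2}$, and which — again using lacunarity of $(q_l)$ together with the Fourier decay of $\mu$ — satisfies an exponential-concentration estimate $\mu(\text{count} = 0) \ll \exp(-c_i\, 2^j |J|)$ for some $c_i > 0$. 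Multiplying by the number of arcs yields $\mu(E_j) \ll_i 2^{j/2}\exp\big(-c_i\, 2^{j/2}(\log\log 2^j)^{\eps+1/2}\big)$, which is summable.

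\emph{Main obstacle.} The substantive point is the concentration estimate in the previous step: transplanting to the Kaufman measure $\mu_M$ the classical principle (Erd\H{o}s--G\'al, Philipp) that a lacunary sequence behaves, for such purposes, like a sequence of independent random variables — here with an effective concentration exponent — using only the polynomial Fourier decay of $\mu_M$ in place of the exact orthogonality enjoyed under Lebesgue measure. This divides into (i) controlling the high moments $\int \big| \sum_{2^j < l \le 2^{j+1}} e(h q_l\bet) \big|^{2m} \, \d\mu_M$ uniformly over the pertinent range of $h$, where one must handle the sparse additive coincidences $q_{l_1} + \dots + q_{l_m} = q_{l_{m+1}} + \dots + q_{l_{2m}}$ — controlled by the rigidity of the continued fraction recursion for $\alp_i$ — and (ii) upgrading these moment bounds to the almost-sure concentration estimate; the factor $(\log\log\log x)^{\eps}$ of room in $\psi$ is precisely the margin that absorbs the true order of fluctuation of the lacunary sums.
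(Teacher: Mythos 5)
Your high-level strategy — work with a Kaufman measure $\mu_M$ on $\cF_M$, reduce to the lacunary sequence $(q_k(\alp_i))$, and run a Borel--Cantelli argument to obtain, for $\mu_M$-a.e.\ $\bet$ and all $\del$ simultaneously, a hit of every short arc in each dyadic block — is aligned in spirit with the paper, which also relies on Kaufman's measure and a lacunary sequence of good approximations. The dimension bookkeeping (take $\cG = \bigcup_M \cG_M$, use Jarn\'ik and the countable intersection over $i$) is correct, and your reduction to $(q_k)$ with $q_k\|q_k\alp_i\|<1$, $\log q_k \asymp_i k$ is sound. So far so good.

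However, the argument the paper actually invokes at this point is materially different from yours, and the difference matters. The paper (for its Theorem~\ref{thm2}, of which the stated theorem is the homogeneous-in-$\gam$ special case) cites \cite[Section~3]{TZ2020}: a law-of-the-iterated-logarithm-type bound for the \emph{discrepancy} $D_T$ of the sequence $(n_t\bet)_{t\le T}$ with respect to $\mu_M$. One then uses the elementary fact that the point set $(n_t\bet)_{t\le T}$ meets every interval of length $\gg D_T/T$; this is where uniformity in $\del$ comes from, and the factor $(\log\log\log n)^{\eps+1/2}$ is inherited directly from the exponent in the discrepancy LIL. You instead propose a covering argument: union-bound over $\asymp 1/|J|$ arcs, and for each arc prove an exponential concentration estimate $\mu(\text{count}=0)\ll\exp(-c\,2^j|J|)$.

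This concentration estimate is the gap. You flag it yourself as the ``main obstacle'' and outline a program (high-moment bounds for lacunary exponential sums against $\mu_M$, controlled by additive coincidences among the $q_l$, then upgraded to exponential concentration), but you do not carry it out, and it is not something you can take off the shelf. Exponential concentration at the Bernoulli rate is \emph{stronger} than anything established for Kaufman-type measures, whose Fourier decay is very slow ($\eta$ of order $10^{-4}$ in Kaufman's construction), and it is also stronger than what the discrepancy LIL of \cite{TZ2020} delivers. Worse, if the heuristic moment count in your sketch were taken at face value, a fourth-moment bound would already make the sum over $j$ converge even with $\psi(x)=(\log x)^{-1/2}$, i.e.\ without the triple-logarithm factor at all — which would improve the stated theorem and should make you suspicious: the near-diagonal frequency contributions $\sum_i h_i q_{l_i}$ that are small but nonzero, together with the slowly decaying Fourier coefficients of $\mathbf{1}_J$, are precisely where the losses enter and where the $(\log\log\log n)^{\eps+1/2}$ margin is actually spent. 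As written, your proposal substitutes a harder (and unverified) probabilistic input for the one the literature provides. To close the argument you should either (a) actually prove the required moment/concentration estimate for $(q_l\bet)$ against $\mu_M$, handling the nonzero-frequency cross-terms carefully, or (b) replace your covering step by an appeal to the discrepancy LIL of \cite{TZ2020} (or \cite{HJK2014}) together with the standard ``discrepancy controls the largest gap'' observation, which is how the paper proceeds.
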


Indeed it does.

\begin{thm} \label{thm2}
Fix $\eps > 0$, let $\alp_1,\alp_2,\ldots$ be a sequence in $\cK$, and let $\gam_1,\gam_2,\ldots$ be a sequence of real numbers. Then there exists $\cG \subseteq \Bad$ of Hausdorff dimension $\dimH(\cG) = 1$ such that if $\bet \in \cG$, $i \in \bN$ and $\del \in \bR$ then
\begin{equation} \label{seven}
n \| n \alp_i - \gam_i \| \cdot \| n \bet - \del \| < \frac{(\log \log \log n)^{\eps + 1/2}}{(\log n)^{1/2}}
\end{equation}
has infinitely many solutions $n \in \bN$.
\end{thm}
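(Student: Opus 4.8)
The plan is to imitate the discrepancy-theoretic proof of the Technau--Zafeiropoulos theorem quoted above, but to run it along the inhomogeneous lacunary sequences supplied by \S\ref{SequenceSection} in place of continued fraction denominators. Fix $\eps > 0$. For each $i \in \bN$, let $(n^{(i)}_t)_{t \ge 1}$ be the sequence of positive integers built in \S\ref{SequenceSection} from $\alp_i \in \cK$ and $\gam_i \in \bR$; it is lacunary, admits a polynomial upper bound, and satisfies
\[
\| n^{(i)}_t \alp_i - \gam_i \| \le C_i \, / \, n^{(i)}_t \qquad (t \in \bN)
\]
for some $C_i > 0$. Since $\alp_i \in \cK$, the continued fraction denominators of $\alp_i$ grow at most exponentially, so the polynomial upper bound together with lacunarity forces $\log n^{(i)}_t \asymp_i t$. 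Writing $\psi(n) = (\log \log \log n)^{\eps + 1/2}(\log n)^{-1/2}$, it is then enough to find, for each $\bet$ in a suitable set $\cG$ and each $\del \in \bR$, infinitely many $t$ with $\| n^{(i)}_t \bet - \del \| < \psi(n^{(i)}_t)/C_i$, since for such a $t$ the left side of \eqref{seven} with $n = n^{(i)}_t$ is at most $\psi(n^{(i)}_t)$. The point of the triple logarithm is that the gain of a factor $(\log\log\log n)^{\eps}$ over the square-root scale, together with the eventual monotonicity of $\psi$, will absorb $C_i$ and every implied constant appearing below.

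Next I would make $\bet$ generic for a Kaufman measure. Let $\mu$ be one of Kaufman's measures \cite{Kau1980}: a probability measure supported on a set of numbers with bounded partial quotients -- hence inside $\Bad$ -- with $\mu(B(x,r)) \ll_\mu r^{s}$ for an exponent $s = s(\mu)$ that can be taken arbitrarily close to $1$, and with polynomial Fourier decay $|\widehat{\mu}(\xi)| \ll_\mu |\xi|^{-\eta}$ for some $\eta > 0$. Because $(n^{(i)}_t)_t$ is Hadamard-lacunary, the second moments $\int |\sum_{t} e(h n^{(i)}_t \bet)|^2 \, d\mu$, with $e(x) = e^{2\pi i x}$, behave as if the terms were orthogonal: the off-diagonal contribution is $\sum_{t \ne t'} \widehat{\mu}\bigl(h(n^{(i)}_t - n^{(i)}_{t'})\bigr)$, which is $O_\mu(1)$ thanks to the Fourier decay and the lacunary spacing, and the same holds for the dyadic sub-blocks of the sequence. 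Feeding this into the discrepancy machinery of \cite{TZ2020, HJK2014} -- an Erd\H{o}s--G\'al/Philipp-type iterated-logarithm bound, in its one-sided upper-bound form, for lacunary sequences against a Rajchman measure with polynomial Fourier decay, obtained via the Erd\H{o}s--Tur\'an inequality \cite{KN1974} and exponential-moment estimates -- yields: for $\mu$-almost every $\bet$ there is $j_0(\bet)$ such that for every $j \ge j_0(\bet)$ the discrepancy of $\{ n^{(i)}_t \bet : 2^{j-1} < t \le 2^j \}$ is $\ll_{i,\mu} (\log j \, / \, 2^j)^{1/2}$.

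Then I would assemble the set. Since $\log n^{(i)}_{2^j} \asymp_i 2^j$ we have $\psi(n^{(i)}_{2^j}) \asymp_i (\log j)^{\eps + 1/2} 2^{-j/2}$, which for large $j$ exceeds the block discrepancy above divided by $C_i$. Hence, for large $j$, every $\del \in \bR$ lies within that block discrepancy of some $\{ n^{(i)}_t \bet \}$ with $2^{j-1} < t \le 2^j$, and for that $t$ we get $\| n^{(i)}_t \bet - \del \| < \psi(n^{(i)}_{2^j})/C_i \le \psi(n^{(i)}_t)/C_i$, a solution of \eqref{seven}; letting $j \to \infty$ produces infinitely many solutions, and because consecutive dyadic blocks are disjoint the extracted solutions tend to infinity, so the argument is unaffected when $\del \equiv n^{(i)}_{t_0}\bet \pmod 1$ for some $t_0$. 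For a fixed $\mu$ this gives, for each $i$, a set $G_{\mu,i}$ of full $\mu$-measure on which \eqref{seven} holds for $\alp_i, \gam_i$ and all $\del$; the countable intersection $G_\mu = \bigcap_i G_{\mu,i}$ still has full $\mu$-measure, lies in $\Bad$, and has $\dimH(G_\mu) \ge s(\mu)$ by the mass distribution principle. Choosing Kaufman measures $\mu_k$ with $s(\mu_k) \to 1$ and putting $\cG = \bigcup_k G_{\mu_k}$ gives $\cG \subseteq \Bad$ with $\dimH(\cG) = 1$ and the stated property.

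I expect the real difficulty to sit in two linked places. The first is the input from \S\ref{SequenceSection}: for $\alp \in \cK$ rather than $\alp \in \Bad$, a large partial quotient of $\alp$ forces the inhomogeneous approximating sequence to take an occasional large jump, so -- unlike the continued fraction denominators used in \cite{TZ2020, HJK2014} -- it is Hadamard-lacunary but need not have bounded successive ratios. The second, consequently, is to confirm that the iterated-logarithm discrepancy bound survives for such sequences against Kaufman's measure: the one-sided upper bound that we need should hold under the Hadamard gap condition alone, with constants depending only on the lacunarity ratio, but this has to be checked, together with enough uniformity to survive the countable intersection over $i$. Both ingredients are, in spirit, already contained in \cite{HJK2014, TZ2020} and \S\ref{SequenceSection}; the novelty is that they can be combined.
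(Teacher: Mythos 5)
Your proposal is correct and follows essentially the same route as the paper: construct the lacunary inhomogeneous approximating sequence via Lemma~\ref{sequence}, feed it into the lacunary-discrepancy machinery against a Kaufman measure (the paper cites \cite[Section 3]{TZ2020} directly and packages the conclusion via the sequence $T_k(\bet,\del)$ rather than dyadic blocks, but this is the same estimate), intersect over the countably many $i$, and pass $M \to \infty$ to reach full Hausdorff dimension. The only cosmetic differences are that the paper absorbs the $i$-dependent constant by choosing $B = B_i$ large rather than by the $(\log\log\log n)^{\eps}$ gain, and invokes \cite[Remark 7]{PVZZ2021} plus Jarn\'{i}k's theorem in place of your explicit mass-distribution-principle step -- these are equivalent.
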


\noindent
We stress that the set $\cG$ in Theorem \ref{thm2} does not depend on $\del$, which is why the rate of approximation is poorer than in \eqref{Counting}. Improving the exponent attached to the logarithm is an open problem with some community interest \cite{HJK2014, PVZZ2021}.

Theorem \ref{thm2} adds to the list of uniform inhomogeneous approximation results obtained so far. The most notable such result is due to Shapira \cite{Shap2008}, who proved that almost all pairs $(\alp,\bet)\in\bR^2$ satisfy 
\[\liminf_{n\to\infty}n\|n\alp-\gam\| \cdot \|n\bet-\del \|=0 \quad \text{ for all }\gam,\del \in\bR,  \]
answering a question of Cassels on whether such pairs $(\alp, \bet)$ even exist. This was subsequently improved by Gorodnik and Vishe \cite{GV2016}, who showed that there exists $\varepsilon>0$ such that almost all $(\alp, \bet)\in \bR^2 $ satisfy 
\[\liminf_{n\to\infty}(\log_5 n)^{\varepsilon}n\|n\alp-\gam\| \cdot \|n\bet-\del \|=0 \quad \text{ for all }\gam,\del \in\bR,  \]
where $\log_5$ denotes the fifth iterate of the function $x\mapsto \max\{1, \log x\}.$  Theorem \ref{thm2} shares with these results of Shapira and Gorodnik--Vishe the feature of uniformity in the shift $\del$. It differs from these results in that its focus is on badly approximable $\bet$, and that it lacks uniformity in the shifts $\gam_i$.
Note that we attain a stronger approximation rate, and that we are able to fix the $\alp_i$.
This stronger approximation rate comes not from working with badly approximable numbers, but rather from non-uniformity in the shifts $\gam_i$. In fact, the Lebesgue analogue of Theorem \ref{thm2} follows straightforwardly by mimicking its proof, since Lebesgue measure on $[0,1]$ has polynomial Fourier decay rate \cite[Equation (4)]{TZ2020}, and by periodicity we only need to consider $\bet \in [0,1]$. The outcome is as follows.

\begin{thm} \label{thm2alt}
Fix $\eps > 0$, let $\alp_1,\alp_2,\ldots$ be a sequence in $\cK$, and let $\gam_1,\gam_2,\ldots$ be a sequence of real numbers. Then, for almost all $\bet \in \bR$, if $i \in \bN$ and $\del \in \bR$ then \eqref{seven}
has infinitely many solutions $n \in \bN$.
\end{thm}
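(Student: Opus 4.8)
The plan is to mimic the proof of Theorem~\ref{thm2}, with Lebesgue measure on $[0,1]$ taking the place of Kaufman's measure on $\Bad$. First I would make two reductions. Since $n\in\bZ$, the map $\bet\mapsto\|n\bet-\del\|$ is $1$-periodic, so it is enough to produce a full-measure set of admissible $\bet$ inside $[0,1]$ and then translate by integers. And since there are only countably many indices, it suffices to fix $i\in\bN$ and exhibit a full-measure set $\cG_i\subseteq[0,1]$ such that for every $\bet\in\cG_i$ and \emph{every} $\del\in\bR$ the inequality~\eqref{seven} has infinitely many solutions $n$; the final set is then $\bigcap_i\cG_i$. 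Fixing $i$, I would apply the construction discussed before Theorem~\ref{thm2} to $\alp_i\in\cK$ and $\gam_i$ to obtain a lacunary sequence $(n_t)$ of positive integers, obeying a polynomial upper bound and satisfying $n_t\|n_t\alp_i-\gam_i\|\ll_i1$; lacunarity together with the polynomial upper bound gives $\log n_t\asymp_i t$. It then suffices to show that for almost every $\bet\in[0,1]$ and every $\del$ there are infinitely many $t$ with $\|n_t\bet-\del\|\ll_i(\log n_t)^{-1/2}(\log\log\log n_t)^{1/2}$, since the extra factor $(\log\log\log n_t)^{\eps}$ present in~\eqref{seven} absorbs all implied constants once $t$ is large.

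The one substantive ingredient is a quantitative, almost-everywhere discrepancy bound for the dilated lacunary sequence $(n_t\bet\bmod1)$. Lebesgue measure on $[0,1]$ trivially satisfies the polynomial Fourier-decay hypothesis invoked in \cite[Equation (4)]{TZ2020} --- its nonzero Fourier coefficients vanish --- so the mechanism employed there, namely the Erd\H{o}s--Tur\'an inequality fed into a Borel--Cantelli/G\'al--Koksma argument along dyadic scales (that is, Philipp's law of the iterated logarithm for discrepancies of lacunary sequences), applies directly and gives, for almost every $\bet\in[0,1]$, a bound $D_N\bigl((n_t\bet)_{t\le N}\bigr)\ll_\bet N^{-1/2}(\log\log N)^{1/2}$. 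Splitting the counting function at the endpoints of a dyadic block, the discrepancy of $\{n_t\bet:2^{k-1}\le t\le2^k\}$ is $\ll_\bet2^{-k/2}(\log k)^{1/2}$. Let $\cG_i$ be the full-measure set of $\bet$ on which these estimates hold.

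To finish, I would fix $\bet\in\cG_i$ and $\del\in\bR$. For each large $k$, the smallness of the $k$-th block discrepancy produces an index $t_k$ with $2^{k-1}\le t_k\le2^k$ and $\|n_{t_k}\bet-\del\|\ll_\bet2^{-k/2}(\log k)^{1/2}$; since $\log n_{t_k}\asymp_i t_k\asymp2^k$, this reads $\|n_{t_k}\bet-\del\|\ll_{i,\bet}(\log n_{t_k})^{-1/2}(\log\log\log n_{t_k})^{1/2}$, and multiplying by $n_{t_k}\|n_{t_k}\alp_i-\gam_i\|\ll_i1$ shows that $n=n_{t_k}$ satisfies~\eqref{seven} for $k$ large. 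The $t_k$ lie in pairwise disjoint dyadic blocks, so this yields infinitely many solutions; intersecting over $i$ and translating by integers then gives the theorem. I do not expect a genuine obstacle here --- this is precisely why the statement is flagged as following straightforwardly from Theorem~\ref{thm2} --- the only points needing care being the (immediate) Fourier-decay property of Lebesgue measure and the bookkeeping of the triple-logarithmic exponent, with the sole real technical input being the classical almost-everywhere discrepancy estimate for dilated lacunary sequences.
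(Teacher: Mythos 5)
Your proposal is correct and takes essentially the same route as the paper, whose proof of Theorem~\ref{thm2alt} consists precisely of the remark that one repeats the proof of Theorem~\ref{thm2} with Lebesgue measure on $[0,1]$ in place of Kaufman's measure, using periodicity to restrict to $[0,1]$ and the (trivial) polynomial Fourier decay of Lebesgue measure to invoke the TZ2020 discrepancy estimate. The only cosmetic difference is that you unpack the step the paper delegates to \cite[Section 3]{TZ2020} by appealing directly to Philipp's law of the iterated logarithm for lacunary sequences and running a dyadic-block extraction of the approximating indices, which is the natural specialisation of the TZ2020 mechanism to Lebesgue measure and yields the same conclusion.
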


Exploiting the uniformity in $\del$ in Theorem \ref{thm2}, we are able to make progress towards Conjecture \ref{FirstConj}, albeit in a weakened form. Setting
\[
\cB = \{ (\bet, \del) \in \bR^2: \bet \in \Bad \cap \Bad(\del) \},
\]
we have the following Corollary of Theorem \ref{thm2}.

\begin{cor} \label{thm2cor}
Fix $\eps > 0$, let $\alp_1,\alp_2,\ldots$ be a sequence in $\cK$, and let $\gam_1,\gam_2,\ldots$ be a sequence of real numbers. Then there exists $\cE \subseteq \cB$ of Hausdorff dimension $\dimH(\cE) = 2$ such that if $i \in \bN$ and $(\bet,\del) \in \cE$ then the inequality \eqref{seven} has infinitely many solutions $n \in \bN$.
\end{cor}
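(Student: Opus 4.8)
I would derive Corollary \ref{thm2cor} from Theorem \ref{thm2} by fibring over the first coordinate. Let $\cG \subseteq \Bad$ be the set supplied by Theorem \ref{thm2}, so that $\dimH(\cG) = 1$, the set $\cG$ does not depend on $\del$, and \eqref{seven} has infinitely many solutions $n \in \bN$ whenever $\bet \in \cG$, $i \in \bN$ and $\del \in \bR$. For $\bet \in \Bad$ put $D_\bet = \{ \del \in \bR : \bet \in \Bad(\del) \}$ and set
\[
\cE = \{ (\bet,\del) \in \bR^2 : \bet \in \cG \ \text{and} \ \del \in D_\bet \}.
\]
Then $\cE \subseteq \cB$: if $(\bet,\del) \in \cE$ then $\bet \in \cG \subseteq \Bad$ and $\bet \in \Bad(\del)$, so $\bet \in \Bad \cap \Bad(\del)$. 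Moreover $\bet \in \cG$, so Theorem \ref{thm2} furnishes infinitely many $n$ solving \eqref{seven} for every $i \in \bN$. Hence everything reduces to showing $\dimH(\cE) = 2$, and the bound $\dimH(\cE) \le 2$ is immediate.

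For the lower bound I would first invoke the fact that, for every $\bet \in \Bad$, the fibre $D_\bet$ is winning for Schmidt's game, with game constants that may be taken uniformly in $\bet$ --- the natural reference is the theory of badly approximable affine forms, in particular Tseng's work. In particular $\dimH(D_\bet) = 1$ for every $\bet \in \cG$, and the uniform Cantor-type structure underlying the winning property yields, for each $t < 1$, probability measures $\nu_\bet$ supported on $D_\bet$ with a Frostman bound $\nu_\bet(B(y,r)) \ll r^t$ whose implied constant is independent of $y$ and of $\bet \in \cG$.

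Now fix $s, t < 1$. Since $\dimH(\cG) = 1 > s$ we have $\cH^s(\cG) = \infty$, so there is a Frostman measure $\mu$ on $\cG$ with $\mu(B(\bet, r)) \ll r^s$. Choosing $\bet \mapsto \nu_\bet$ measurably (a measurable selection theorem applies, $\cE$ being Borel, or at worst analytic), one forms the fibred product $\lambda$ on $\cE$ given by $\lambda(A) = \int_{\cG} \nu_\bet(\{ \del : (\bet,\del) \in A \}) \, \d\mu(\bet)$; this is a nonzero finite measure supported on $\cE$, and evaluating it on a square $Q$ of side $r$ gives $\lambda(Q) \ll r^s \cdot r^t = r^{s+t}$. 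By the mass distribution principle $\dimH(\cE) \ge s + t$, and letting $s,t \to 1^-$ gives $\dimH(\cE) \ge 2$. (Alternatively, one may quote a slicing inequality for Hausdorff measures --- as found in the books of Falconer and of Mattila --- applied to $\cE$, using that $\cH^t(D_\bet) = \infty$ for all $\bet \in \cG$ and $t < 1$.) There is no deep obstacle here, this being a genuine corollary; the only substantive input beyond Theorem \ref{thm2} is the fibrewise winning statement for $D_\bet$, and I expect the fiddliest point to be securing the uniform Frostman bound on the $\nu_\bet$ together with a measurable choice $\bet \mapsto \nu_\bet$ and the analyticity of $\cE$ --- all routine.
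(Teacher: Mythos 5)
Your overall structure matches the paper's: start from the set $\cG$ of Theorem \ref{thm2}, fibre over $\bet\in\cG$ using Tseng's result that $\cS_\bet = D_\bet$ (twisted badly approximable shifts) is Schmidt-winning and hence of full Hausdorff dimension, and conclude $\dimH(\cE)=2$. You correctly observe that everything reduces to the dimension count, and the membership of $\cE$ in $\cB$ and the infinitude of solutions to \eqref{seven} are automatic.

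Where you diverge is in how you prove $\dimH(\cE)\ge 2$. Your primary argument builds a fibred product measure $\lambda = \int_{\cG}\nu_\bet\,\d\mu(\bet)$ from a Frostman measure $\mu$ on $\cG$ and fibrewise Frostman measures $\nu_\bet$ on $D_\bet$ with uniform constants, then applies the mass distribution principle. This is correct in principle, but it forces you to supply three technical inputs that the paper sidesteps entirely: (i) a uniform Frostman bound across the fibres (which does follow, since Tseng's winning constant is universal, but still needs to be said), (ii) a measurable selection $\bet\mapsto\nu_\bet$, and (iii) Borel/analytic structure of $\cE$ so that $\lambda(\cE)$ makes sense. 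The paper instead quotes Marstrand's slicing theorem (\cite[Theorem 5.8]{Fal1986}), stated in Theorem \ref{Marstrand} for \emph{arbitrary} $\cE\subseteq\bR^2$ and $\cG\subseteq\bR$: from $H^s(\cE_\bet)=\infty$ for each $\bet\in\cG$ and $H^s(\cG)=\infty$, one reads off $H^{2s}(\cE)=\infty$ directly, with no measurability, uniformity, or selection hypotheses to check. You do flag this cleaner route in your parenthetical remark, and that parenthetical is exactly the paper's argument; the paper's choice is preferable here precisely because it is a one-line deduction from a black-box theorem that applies to unstructured sets, whereas your product-measure construction, while salvageable, makes you prove a version of the slicing inequality from scratch and pay for it in fiddly details that you yourself acknowledge.
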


Though Corollary \ref{thm2cor} is closer to Theorem \ref{thm2} than to Theorem \ref{MainThm},
its salient feature is that it involves inhomogeneously badly approximable numbers.  Corollary \ref{thm2cor} is \emph{doubly metric} in nature, see \cite[Footnote 2]{ET2011}. Kleinbock \cite{Kle1999} previously showed, by dynamical means, that
$\dim(\cB) = 2$, though that result is more general. Corollary \ref{thm2cor} can alternatively be viewed as a refinement of a special case of Kleinbock's result.

\subsection*{Organisation} We construct our sequence of inhomogeneous approximations in Section \ref{SequenceSection}. We then prove Theorems \ref{MainThm} and \ref{thm2} in Sections \ref{Proof1} and \ref{Proof2}, respectively. Finally, we establish Corollary \ref{thm2cor} in \S \ref{DoublyMetricSection}.

\subsection*{Notation} Given $x\in\bR$ we write $\|x\|=\min\{|x-k|: k\in\bZ\}$ for the distance of $x$ from its nearest integer. When $N\geq 1$ is a positive integer, we write $[N] = \{1,2,\ldots,N\}$ for the set of positive integers at most equal to $N$.  We denote by $\dim_{\mathrm{H}}(\cA)$ the Hausdorff dimension of a subset $\cA\subseteq \bR$. For $s > 0$, we write $H^s$ for Hausdorff $s$-measure. We adopt the Vinogradov and Bachman--Landau notations: if functions $f$ and $g$ output non-negative real values, we write $f \ll g$, $g \gg f$, or $f = O(g)$ if $f \le Cg$ for some constant $C$, and we write $f \asymp g$ if $f \ll g \ll f$.

\subsection*{Funding} SC was supported by EPSRC Fellowship Grant EP/S00226X/2, and by a Mittag-Leffler Junior Fellowship. AZ is supported by a postdoctoral fellowship funded by Grant 275113 of the Research Council of Norway. 

\subsection*{Acknowledgements}
We thank Victor Beresnevich and Evgeniy Zorin for helpful conversations.

\section{A sequence of inhomogeneous approximations}
\label{SequenceSection}

Let $\alp \in \bR \setminus \bQ$, and let
\[
\alp = [a_0; a_1, a_2, \ldots]
\]
be the continued fraction expansion of $\alp$, where the \emph{partial quotients} are $a_0 \in \bZ$ and $a_1,a_2,\ldots \in \bN$. Let
$p_0/q_0, p_1/q_1, \ldots$ be the convergents of the continued fraction expansion of $\alp$. Specifically, define
\[
p_{-1} = 1, \qquad p_0 = a_0,
\qquad p_k = a_k p_{k-1} + p_{k-2} \quad (k \in \bN)
\]
and
\begin{equation} \label{DenomRecurs}
q_{-1} = 0, \qquad q_0 = 1,
\qquad q_k = a_k q_{k-1} + q_{k-2} \quad (k \in \bN).
\end{equation}
Though the sequence $q_1,q_2,\ldots$ may or may not be lacunary, the subsequence $q_2,q_4,q_6,\ldots$ is lacunary, since
\[
q_k = a_k q_{k-1} + q_{k-2} \ge 2 q_{k-2} \qquad (k \in \bN).
\]
It then follows by induction that 
\[
q_{2t} \ge 2^t \qquad (t \in \bN).
\]

Recalling \eqref{Kdef}, we now come to the main agenda item for this section. The key features of the sequence constructed below are that it is lacunary and that
\[
\log n_t \asymp t,
\qquad n_t \| n_t \alp - \gam \| \ll 1
\qquad (t \in \bN).
\]

\begin{lemma} \label{sequence} Let $\alp \in \cK$, and let $q_0,q_1,q_2,\ldots$ be the continued fraction denominators of $\alp$. Put
\[
C = \sup \{ t^{-1} \log q_t: t \in \bN \} \in (0,\infty),
\]
and let $\gam \in \bR$. Then there exists a lacunary sequence $n_1, n_2, \ldots$ of positive integers such that
\begin{equation} \label{ineqs}
8^t < n_t \le 4e^{6Ct},
\quad
\| n_t \alp - \gam \|
\le 8/n_t \qquad (t \in \bN).
\end{equation}
\end{lemma}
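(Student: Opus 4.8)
The plan is to read $n_t$ off the \emph{Ostrowski expansion} of $\gam$ with respect to $\alp$, truncated at index $6t$. Write $\theta_{k-1} = q_{k-1}\alp - p_{k-1}$; the classical Ostrowski representation (see e.g.\ \cite{RS1992}) gives $\gam = c_0 + \sum_{k\ge1}c_k\theta_{k-1}$ with $c_0\in\bZ$ and $0\le c_k\le a_k$ for $k\ge1$. I set
\[
M_k = \sum_{j=1}^{k}c_j q_{j-1}, \qquad n_t = M_{6t} + q_{6t},
\]
so that, since $q_{j-1}\alp\equiv\theta_{j-1}\pmod1$,
\[
n_t\alp - \gam \;\equiv\; \theta_{6t} - \sum_{k>6t}c_k\theta_{k-1} \pmod 1 .
\]

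First I would record the elementary continued-fraction facts. From \eqref{DenomRecurs} one has $q_{2m}\ge 2q_{2m-2}$ and $q_4\ge5$, hence $q_{6t}\ge 5\cdot 2^{3t-2} > 8^t$, while $q_{6t}\le e^{6Ct}$ by the definition of $C$. Summing $a_jq_{j-1}=q_j-q_{j-2}$ gives $M_{6t}\le \sum_{j\le 6t}a_jq_{j-1}=q_{6t}+q_{6t-1}-1 < 2q_{6t}$, whence $n_t < 3q_{6t}\le 3e^{6Ct} < 4e^{6Ct}$; combined with $n_t\ge q_{6t} > 8^t$ this settles the size bounds in \eqref{ineqs}. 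I also record $|\theta_{k-1}|<1/q_k$, the recursion $|\theta_{k-1}| = a_{k+1}|\theta_k| + |\theta_{k+1}|$, and the telescoping identity $\sum_{j\ge k+2}a_j|\theta_{j-1}| = |\theta_k| + |\theta_{k+1}|$ (obtained by summing $a_j|\theta_{j-1}| = |\theta_{j-2}| - |\theta_j|$).

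For the approximation estimate I would bound
\[
\|n_t\alp - \gam\| \le |1-c_{6t+1}|\,|\theta_{6t}| + \sum_{k\ge 6t+2}c_k|\theta_{k-1}| \le |1-c_{6t+1}|\,|\theta_{6t}| + |\theta_{6t}| + |\theta_{6t+1}|,
\]
and split into cases. If $c_{6t+1}\ge1$, the right-hand side is at most $c_{6t+1}|\theta_{6t}| + |\theta_{6t+1}| \le a_{6t+1}|\theta_{6t}| + |\theta_{6t+1}| = |\theta_{6t-1}| < 1/q_{6t}$. If $c_{6t+1}=0$, it equals $2|\theta_{6t}| + |\theta_{6t+1}| = |\theta_{6t}| + (|\theta_{6t}|+|\theta_{6t+1}|) \le |\theta_{6t}| + |\theta_{6t-1}| < 2/q_{6t}$, using $|\theta_{6t-1}|\ge|\theta_{6t}|+|\theta_{6t+1}|$. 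Either way $n_t\|n_t\alp - \gam\| < 3q_{6t}\cdot(2/q_{6t}) = 6 < 8$, giving $\|n_t\alp - \gam\|\le 8/n_t$. Lacunarity is then immediate: $q_{6t+6}\ge 8q_{6t}$, so $n_{t+1}/n_t > q_{6(t+1)}/(3q_{6t})\ge 8/3 > 1$.

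The only ingredient that is not purely formal is the Ostrowski expansion itself; once it is available, everything reduces to the bookkeeping above, and the constants $4$ and $8$ in \eqref{ineqs} are merely absorbing the slack (one could also bypass Ostrowski via the three-distance theorem, which likewise furnishes a $j\asymp q_{6t}$ with $\|j\alp-\gam\|\ll 1/q_{6t}$, and take $n_t:=j+q_{6t}$). The point to watch is the case $c_{6t+1}=0$: the naive triangle inequality only gives $\|n_t\alp-\gam\|\le 3|\theta_{6t}|$, hence $n_t\|n_t\alp-\gam\| < 9q_{6t}|\theta_{6t}|$, which need not be below $8$; it is the identity $|\theta_{6t-1}| = a_{6t+1}|\theta_{6t}| + |\theta_{6t+1}|$ that rescues the bound.
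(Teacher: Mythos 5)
Your proof is correct, and it takes a genuinely different route from the paper's. The paper chooses $n_t = q_{6t} + b$ where $b$ is furnished \emph{non-constructively} by the three-distance theorem in the Mukherjee--Karner form: among the first $m = 2q_{6t}+q_{6t-1}-1$ multiples of $\alp$, the gaps between consecutive fractional parts are all $\le a_{6t+1}/q_{6t+1}\le 1/q_{6t}$, so some $b\in[m]$ satisfies $\|b\alp-\gam\|\le 1/q_{6t}$; the size bounds and lacunarity then follow exactly as in your bookkeeping. You instead read $b$ off explicitly as the truncated Ostrowski sum $M_{6t}=\sum_{j\le 6t}c_jq_{j-1}$, which lands in the same range $[0, q_{6t}+q_{6t-1}-1]$, and you control the tail via the telescoping identity $\sum_{j\ge k+2}a_j|\theta_{j-1}|=|\theta_k|+|\theta_{k+1}|$. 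The two tools are close cousins (one can prove the three-distance gap bound from the Ostrowski digits), but your argument is more constructive and, as you note, the case $c_{6t+1}=0$ genuinely requires the identity $|\theta_{6t-1}|=a_{6t+1}|\theta_{6t}|+|\theta_{6t+1}|$ rather than a naive triangle inequality. The paper's route is shorter because the three-distance statement is invoked as a black box; yours carries a bit more internal verification but also exhibits the approximating integers explicitly. One small point worth tightening: depending on the exact normalisation of the Ostrowski expansion one usually has $c_1\le a_1-1$ rather than $c_1\le a_1$; this does not affect any of your estimates (all the relevant indices are $\ge 2$, and the bound on $M_{6t}$ only uses $c_j\le a_j$), but it is worth stating the convention precisely when citing \cite{RS1992}.
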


\begin{proof} Let $t \in \bN$. We will choose $n_t = q_{6t} + b$, where
\begin{equation} \label{bineq}
1 \le b \le m := 2q_{6t} + q_{6t-1} - 1, \qquad \| b \alp - \gam \| \le 1/q_{6t}.
\end{equation}
Observe from the theory of continued fractions that
\[
\| q_{6t} \alp \| \le 1/q_{6t+1} \le 1/q_{6t}.
\]

To choose $b$, we employ the three distance theorem in the form \cite{MK1998}. By \cite[Corollary 1]{MK1998}, wherein $k=6t$, $r=1$ and $s=q_k-1$, if we write the fractional parts
\[
\{ \alp \}, \{ 2 \alp \}
\ldots, \{ m \alp \}
\]
as $d_1,\ldots,d_m$ with the ordering
\[
0 =: d_0 < d_1 < \cdots < d_m < d_{m+1} := 1,
\]
then
\[
\max \{ d_{i+1} - d_i: 0 \le i \le m \}
\le a_{6t+1}/q_{6t+1}\le 1/q_{6t}.
\]
As $\{ \gam \} \in [d_i, d_{i+1}]$ for some $i$, there must exist $b \in [m]$ such that 
\[
\| b \alp - \gam \| \le 1/q_{6t}.
\]
This confirms \eqref{bineq}.

We now have 
\[
8^t \le q_{6t} < n_t \le 4q_{6t} \le 4e^{6Ct}
\]
for $t \in \bN$, and by the triangle inequality
\[
\| n_t \alp - \gam \| \le 2/q_{6t} \le 8/n_t.
\]
The sequence is lacunary, for if $t \in \bN$ then
\[
n_{t+1} > q_{6t+6} \ge 8q_{6t} \ge 2n_t.
\]
\end{proof}

\section{Kaufman's measures}
\label{Proof1}

In this section, we establish Theorem \ref{MainThm}. Let $M \in \bZ_{\ge 3}$, and denote by $\cF_M$ the set of
\[
[0;a_1,a_2,\ldots]
\]
with $a_1,a_2,\ldots \in [M]$. Note that $\cF_M \subseteq \Bad$, by the standard characterisation of badly approximable numbers as irrationals with bounded partial quotients \cite[Theorem 1.4]{BRV2016}. In pioneering work, Kaufman \cite{Kau1980} constructed a probability measure $\mu$ supported on $\cF_M$ whose Fourier transform decays as
\[
\hat \mu(t) \ll (1+|t|)^{-7/10^4}.
\]

\begin{remark} Owing to the work of Queff\'elec and Ramar\'e \cite{QR2003}, one can take $M=2$ here and still have a polynomial Fourier decay rate. We do not require this case.
\end{remark}

With $C$ as in Lemma \ref{sequence}, there exists a lacunary sequence $n_1, n_2, \ldots$ of positive integers satisfying \eqref{ineqs}, which implies
\[
\log n_t \asymp t \qquad (t \in \bN).
\]
Define $\psi: \bN \to [0,1]$ by $\psi(1)=0$ and
\[
\psi(n) = 1/(8 \log n) \qquad (n\ge 2).
\]
By \cite[Theorem 1]{PVZZ2021}, we have
\[
\# \{ t \in [T]: \| n_t \bet - \del \| \le \psi(n_t) \} = 2 \Psi(T)
+ O(\Psi(T)^{2/3} (\log \Psi(T) + 2)^{2.1})
\]
for $\mu$-almost all $\bet \in \cF_M$, where
\[
\Psi(T) = \sum_{t \le T} \psi(n_t) \asymp \log T
\qquad (T \ge 2).
\]
Observing that
\[
n_t \|n_t \alp - \gam\| \cdot \|n_t \bet - \del\| \le  1/\log n_t
\]
whenever $\|n_t \bet - \del\| \le \psi(n_t)$, we now have
\[
\# \{ t \in [T]:
n_t \|n_t \alp - \gam\| \cdot \|n_t \bet - \del\| \le 1/\log n_t \}
\gg \log T,
\]
for $\mu$-almost all $\bet \in \cF_M$.

Finally, if $N \in \bN$ is large then 
\[
n_T \le 4e^{6CT} < N \le 4e^{6C(T+1)}
\]
for some $T \in \bN$. Therefore
\[
\# \{ n \in [N]: n \| n \alp - \gam\| \cdot \| n \bet - \del\| \le 1/\log n \} \\
\gg
\log T \gg \log \log N,
\]
for $\mu$-almost all $\bet \in \cF_M$. It then follows from \cite[Remark 7]{PVZZ2021} that the Hausdorff dimension of the set of $\bet \in \Bad$ satisfying \eqref{Counting} is greater than or equal to $\dim_{\mathrm{H}}(\cF_M)$.
To finish the proof of Theorem \ref{MainThm}, recall Jarn\'ik's conclusion \cite{Jar1928} that
\[
\lim_{M \to \infty} \dim_{\mathrm{H}}(\cF_M) = 1.
\]

\section{A uniform result}
\label{Proof2}

In this section, we adapt \cite{TZ2020} to establish Theorem \ref{thm2}. Denote by $\cG$ the set of $\bet \in \Bad$ such that if $i \in \bN$ and $\del \in \bR$ then \eqref{seven} has infinitely many solutions $n \in \bN$. 
Choose an integer $M \ge 3$  and let $\mu$ be Kaufman's measure on $\cF_M$.

Let $i \in \bN$, and let $n_1,n_2,\ldots$ be the sequence obtained by applying Lemma \ref{sequence} to $\alp_i$ and $\gam_i$. Let $B = B_i > 0$, and put
\[
\psi(1) = \psi(2) = 0,
\qquad
\psi(T) = B^{-1} T^{-1/2} (\log \log T)^{\eps+1/2}
\quad (T \in \bZ_{\ge 3}).
\]
Arguing as in \cite[Section 3]{TZ2020}, we find that for $\mu$-almost all $\bet$, for all $\del \in \bR$ we have a well defined, increasing sequence given by
\[
T_k = T_k(\bet,\del) = \min \{ 
T \in \bN:
\# \{ t \in [T]: 
\| n_t \bet - \del \| < \psi(T) \} = k\} \qquad (k \in \bN).
\]
Thus, by \eqref{ineqs}, for $\mu$-almost all $\bet$ and all $\del \in \bR$ we have 
\begin{align*}
    n_{T_k}\|n_{T_k}\alp_i-\gam_i\| \cdot \|n_{T_k}\bet - \del \| \ll \frac{(\log\log T_k)^{\eps+1/2}}{B T_k^{1/2}}
    \ll  \frac{ (\log\log\log n_{T_k})^{\eps + 1/2} }{B(\log n_{T_k})^{1/2}}.
\end{align*}
Choosing $B$ sufficiently large yields
\[
n_{T_k}\|n_{T_k}\alp_i-\gam_i\| \cdot \|n_{T_k}\bet - \del \| <
  \frac{ (\log\log\log n_{T_k})^{\eps + 1/2} }{(\log n_{T_k})^{1/2}}
  \qquad (k \in \bN).
\]

The upshot is that for $\mu$-almost all $\beta\in\Bad$, if $i\in \bN$ and $\del\in\bR$ then \eqref{seven} holds for infinitely many $n \in \bN$. In other words, we have
\[
\mu(\cG) = 1.
\]
Now according to \cite[Remark 7]{PVZZ2021} we have $\dim_{\mathrm{H}}(\cG) \ge \dim_{\mathrm{H}}(\cF_M)$, and since $M$ can be chosen arbitrarily large we conclude that $\dim_{\mathrm{H}}(\cG)=1.$

This finishes the proof of Theorem \ref{thm2}. As discussed in the introduction, Theorem \ref{thm2alt} is almost identical; one replaces $\mu$ by Lebesgue measure on $[0,1]$.

\section{A doubly metric problem}
\label{DoublyMetricSection}

Corollary \ref{thm2cor} follows readily from Theorem \ref{thm2} and two further ingredients.

\subsection{Twisted Diophantine approximation}

Tseng \cite{Tse2009} established that if $\bet \in \bR$ then $\dimH(\cS_\bet) = 1$, where
\begin{equation} \label{TwistedBad}
\cS_\bet = \{ \del \in \bR: \bet \in \Bad(\del) \}.
\end{equation}
These types of results are referred to as \emph{twisted Diophantine approximation} statements, see \cite{BM2017, Har2012}. In fact, Tseng proved \emph{a fortiori} that $\cS_\bet$ is \emph{winning} in the sense of Schmidt \cite{Sch1966}.

\subsection{Marstrand's slicing theorem}

Marstrand's slicing theorem \cite[Theorem 5.8]{Fal1986} is a seminal result in fractal geometry. 

\begin{thm} [Marstrand's slicing theorem] \label{Marstrand}
Let $\cE \subseteq \bR^2$, and let $\cG \subseteq \bR$. For $\bet \in \bR$, set
\[
\cE_\bet = \{ (\bet, \del) \in \cE: \del \in \bR \}.
\]
Let $s,t \in [0,1]$, let $c > 0$, and suppose that
\[
H^t(\cE_\bet) > c \qquad (\bet \in \cG).
\]
Then
\[
H^{s+t}(\cE) \gg c H^s(\cG),
\]
and the implied constant only depends on $s$ and $t$.
\end{thm}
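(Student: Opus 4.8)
The plan is to prove Marstrand's slicing theorem as stated, which asserts a lower bound on $H^{s+t}(\cE)$ in terms of the slices. First I would reduce to the case where $H^s(\cG) > 0$, since otherwise there is nothing to prove. The standard approach is via the \emph{mass distribution principle} combined with the energy method, or alternatively a direct covering argument. I would proceed by a covering argument: fix any countable cover $\{U_j\}$ of $\cE$ by sets of small diameter $\rho_j$. Without loss of generality each $U_j$ may be taken to be a square (or a ball) in $\bR^2$ of side length $\rho_j$. The key observation is that the vertical slice $\cG \cap \pi(U_j)$, where $\pi: \bR^2 \to \bR$ is projection onto the first coordinate, is contained in an interval of length at most $\rho_j$, and for each fixed $\bet$, the fibre $(U_j)_\bet = \{\del: (\bet,\del) \in U_j\}$ is contained in an interval of length at most $\rho_j$ as well.

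The heart of the argument is a Fubini-type / double-counting step. For each $\bet \in \cG$, the sets $\{(U_j)_\bet : \bet \in \pi(U_j)\}$ form a cover of $\cE_\bet$, so $\sum_{j: \bet \in \pi(U_j)} \rho_j^t \ge H^t_\infty(\cE_\bet)$ up to constants — but we need to be slightly careful since the $U_j$ have varying diameters and $H^t$ is defined via a limit over fine covers. One clean way around this: restrict attention to covers with $\sup_j \rho_j < \eps$, and note that for such covers $\sum_{j: \bet \in \pi(U_j)} \rho_j^t \gg c$ for every $\bet \in \cG$, using the hypothesis $H^t(\cE_\bet) > c$. Now integrate (or rather, sum the measure) this inequality against a Frostman measure on $\cG$: by the mass distribution principle applied to $\cG$, since we want a lower bound involving $H^s(\cG)$, I would instead argue by contradiction — if $H^{s+t}(\cE) = 0$ we can choose covers making $\sum_j \rho_j^{s+t}$ arbitrarily small, and then deduce that $\cG$ can be covered efficiently at dimension $s$, contradicting $H^s(\cG) > 0$. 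Concretely: given a cover $\{U_j\}$ of $\cE$ with $\sum_j \rho_j^{s+t}$ small, for each dyadic scale $2^{-k}$ let $J_k$ be the indices with $\rho_j \asymp 2^{-k}$; the squares $\pi(U_j)$ for $j \in J_k$ cover the set of $\bet \in \cG$ for which a definite proportion $\gg c$ of the slice-mass $H^t_\infty(\cE_\bet)$ is captured at scale $2^{-k}$, and summing a weighted version over $k$ recovers a cover of $\cG$ of controlled $s$-cost. Balancing the two exponents $s$ and $t$ is exactly where the product structure $\rho_j^s \cdot \rho_j^t = \rho_j^{s+t}$ is used.

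The main obstacle I anticipate is the bookkeeping needed to pass from the "infinity" Hausdorff content $H^t_\infty$ (which is all one directly controls from a single cover) to the genuine Hausdorff measure $H^t$ appearing in the hypothesis, together with handling the varying diameters cleanly. The cleanest fix is to work throughout with $\delta$-covers for small $\delta$, so that all the slice inequalities hold simultaneously with the hypothesised constant $c$, and only take $\delta \to 0$ at the very end; the implied constant in the conclusion then depends only on $s$ and $t$ as claimed (it comes from the overlap of squares with their coordinate projections, which is dimension-independent here since we are in $\bR^2 = \bR \times \bR$). Since this is a standard result for which \cite[Theorem 5.8]{Fal1986} may simply be cited, in the paper I would keep the argument brief and refer the reader there for the full details, noting only the minor point that Falconer's statement is phrased for general slicing and specialises to the vertical-line form used here.
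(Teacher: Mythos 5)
The paper does not prove Theorem~\ref{Marstrand}; it is a classical result cited directly from Falconer's book, so there is no in-paper proof to compare against. Your sketch captures the right overall shape (slice the cover of $\cE$ over the line, double count, convert the column estimate into an $s$-dimensional cover of $\cG$), and you correctly identify the content-versus-measure issue as the main obstacle, but the fix you propose for it does not work. You write that by restricting to $\delta$-covers with $\delta$ small, ``all the slice inequalities hold simultaneously with the hypothesised constant $c$.'' This is false: the hypothesis $H^t(\cE_\bet) > c$ says $\lim_{\delta\to 0} H^t_\delta(\cE_\bet) > c$, and the threshold scale below which $H^t_\delta(\cE_\bet) > c$ depends on $\bet$. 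For any fixed $\delta$ there may well be $\bet \in \cG$ with $H^t_\delta(\cE_\bet) \le c$, so the column bound $\sum_{j\,:\,\bet\in\pi(U_j)}\rho_j^t \gg c$ is not available for all $\bet \in \cG$ simultaneously from a single $\delta$-cover. The standard repair is to set $\cG_\delta := \{\bet \in \cG : H^t_\delta(\cE_\bet) > c\}$, note that $\cG_\delta$ increases to $\cG$ as $\delta \downarrow 0$, run the covering argument with $\cG_\delta$ in place of $\cG$, and only at the end use monotone convergence of $H^s$. Your phrase ``take $\delta\to 0$ at the very end'' gestures in this direction but elides precisely this step, which is where the real care is needed.

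A secondary point: your pivot to a contradiction argument (``if $H^{s+t}(\cE)=0\dots$ contradicting $H^s(\cG)>0$'') establishes only the qualitative implication $H^{s+t}(\cE)=0 \Rightarrow H^s(\cG)=0$, which is strictly weaker than the stated quantitative bound $H^{s+t}(\cE) \gg c\,H^s(\cG)$. The direct argument --- integrate the column estimate along the line, pass to a dyadic (net-measure) setting, and convert it into a weighted cover of $\cG_\delta$ --- gives the stated inequality with a constant depending only on $s$ and $t$, without a detour through contradiction. As you note at the end, citing \cite[Theorem 5.8]{Fal1986} is what the paper in fact does and is the right call; but if one were to sketch the proof, these are the two points to get right.
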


We now proceed in earnest towards Corollary \ref{thm2cor}. Let $s \in (0,1)$, and let $\cG$ be as in Theorem \ref{thm2}. For $\bet \in \bR$, let 
\[
\cE_\bet = \{ \bet \} \times \cS_\bet,
\]
where $\cS_\bet$ is as in \eqref{TwistedBad}. Finally, put
\[
\cE = \bigcup_{\bet \in \cG} \cE_\bet.
\]
By Tseng's theorem, we have $H^s(\cE_\bet) = H^s(\cS_\bet) = \infty$ for all $\bet$, and by Theorem \ref{thm2} we have $H^s(\cG) = \infty$.
By Marstrand's slicing theorem, Theorem \ref{Marstrand}, we therefore have
\[
H^{2s}(\cE) = \infty,
\]
and we conclude that $\dimH(\cE) = 2$.

\providecommand{\bysame}{\leavevmode\hbox to3em{\hrulefill}\thinspace}

\end{document}